\def\N{\mathbb{N}}
\def\tr{\textrm}
\def\disj{0}
\def\mathscr{}
\def\ind{\mathrm{ind}}
\def\dcrit{\tr{d}_{\tr{crit}}} 
\def\dyckm{\downarrow_{m}}
\def\dycku{\uparrow}
\def\dyckk{\downarrow_{k-1}}
\def\dycko{\downarrow}
\DeclareMathOperator{\minl}{minleaf}
\newtheorem{thm}{Theorem}
\newtheorem{lem}[thm]{Lemma}
\newtheorem{prop}[thm]{Proposition}
\begin{document}
\title{The density Tur\'an problem for hypergraphs}
\author{Adam Sanitt \and John Talbot}
\maketitle
\begin{abstract}
  \noindent
Given a $k$-graph $H$ a complete blow-up of $H$ is a $k$-graph $\hat{H}$ formed by replacing each $v\in V(H)$ by a non-empty vertex class $A_v$ and then inserting all edges between any $k$ vertex classes corresponding to an edge of $H$. Given a subgraph $G\subseteq \hat{H}$ and an edge $e\in E(H)$ we define the density $d_e(G)$ to be the proportion of edges present in $G$ between the classes corresponding to $e$.

The density Tur\'an problem for $H$ asks: determine the minimal value $\dcrit(H)$ such that any subgraph $G\subseteq \hat{H}$ satisfying $d_e(G)> \dcrit(H)$ for every $e\in E(H)$ contains a copy of $H$ as a transversal, i.e.~a copy of $H$ meeting each vertex  class of $\hat{H}$ exactly once.

  We give upper bounds for this hypergraph density Tur\'an problem that generalise the known bounds for the case of graphs due to Csikv\'ari and Nagy \cite{CsikvariNagy}, although our methods are different, employing an entropy compression argument.
\end{abstract}
\section{Introduction}
\label{sec:introduction}
The classical Tur\'an problem asks how many edges a graph or hypergraph $G$ can have if it does not contain a copy of a given forbidden subgraph $H$. The problem we consider is a variant known as the density Tur\'an problem (see Csikv\'ari and Nagy \cite{CsikvariNagy}). We consider subgraphs of blow-ups of a forbidden hypergraph $H$ (see below for formal definitions) and ask how dense this must be to guarantee a copy of the original hypergraph $H$.

In this paper we consider the general $k$-uniform hypergraph version of the problem. Let $H$ be an $k$-uniform hypergraph, or $k$-graph for short, with vertex set $V(H)=\{v_1,\ldots,v_h\}$ and edge set $E(H)\subseteq \binom{V(H)}{k}$. A $k$-graph $K$ is a \emph{subgraph} of the $k$-graph $H$ if and only if $V(K)\subseteq V(H)$ and $E(K)\subseteq E(H)$. The \emph{neighbourhood} of a vertex $v\in V(H)$ is \[\Gamma_H(v)=\{B\in \binom{V(H)}{k-1}\mid \{v\}\cup B\in E(H)\}.\] The \emph{degree} of a vertex $v$ is the size of its neighbourhood $|\Gamma_H(v)|$, while the \emph{maximum degree} of $H$ is $\Delta(H)=\max_{v\in V(H)}|\Gamma_H(v)|$. We also need the related concept of the \emph{maximum disjoint degree} of $H$: \[
\Delta_\disj(H)=\max_{v\in V(H)}\{j\mid \tr{there exist pairwise disjoint $B_1,\ldots, B_j\in \Gamma(v)$}\}.\]Note that $\Delta_\disj(H)\leq\Delta(H)$ with equality for all $2$-graphs and linear $k$-graphs.

A complete blow-up $\hat{H}$ is formed from $H$ by replacing each $v\in V(H)$ by a non-empty class $A_v$ containing $a_v$ vertices and then taking the edges of $\hat{H}$ to be all choices of $k$ vertices from any $k$ classes that correspond to an edge of $H$. More formally $\hat{H}$ has vertex set $V(\hat{H})=A_1\dot{\cup} \cdots\dot{\cup} A_h$ where  $A_i=\{w^i_1,\ldots,w_{a_i}^i\}\neq \emptyset$, and edge set \[ 
E(\hat{H})=\{w_{j_1}^{b_1}\cdots w_{j_k}^{b_k} \mid w_{b_1}\cdots w_{b_k}\in E(H),\ 1\leq j_i\leq a_i,\ 1\leq i\leq k\}.\]
If each vertex class has size $n$ then we call this the \emph{complete $n$-blow-up} of $H$ and denote it by $\hat{H}(n)$. We define a \emph{blow-up} of $H$ to be any subgraph $G\subseteq \hat{H}$ while an \emph{$n$-blow-up} is simply any subgraph $G\subseteq \hat{H}(n)$ with $V(G)=V(\hat{H}(n))$.  
 
An \emph{$H$-transversal} is a subgraph isomorphic to $H$ with exactly one vertex from each vertex class. We  say that a blow-up of $H$ is \emph{$H$-free} if it does not contain an $H$-transversal. 
We are interested in the question of when a blow-up of $H$ will contain an $H$-transversal. 

\begin{figure}[h]
  \centering
\includegraphics[width=11cm]{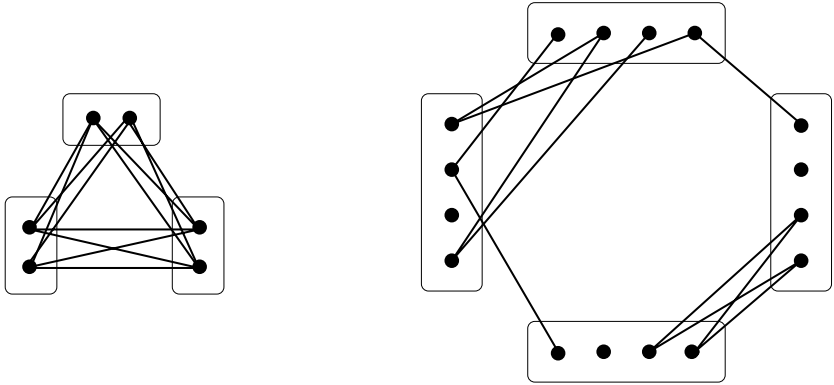}
\caption{$\hat{K_3}(2)$ the complete $2$-blow-up of $K_3$ and a $C_4$-free $4$-blow-up of $C_4$}
\end{figure}

If $G$ is a blow-up of $H$  and  $e\in E(H)$ we define $G[e]$ to be the $k$-partite subgraph of $G$ induced by $\cup_{v_i \in e}A_i$. We then define \[d_e(G)=\frac{|E(G[e])|}{\prod_{v_i\in e}a_i},\] which is simply the ordinary density of $G[e]$ and let $d(G)=\min_{e\in E(H)}d_e(G)$. Thus if $G$ is a blow-up of $H$ and $d(G)=d$ then every $k$-partite subgraph of $G$ formed from $k$ classes that correspond to an edge in $H$ has density at least $d$. 

The question we will consider is when does $d(G)> \delta$ imply that $G$ contains an $H$-transversal. We define the \emph{critical edge density} to be
\[
d_\tr{crit}(H)=\sup\{d(G)\mid \tr{$G$ is an $H$-free blow-up of $H$}\}.\]
Note that if $H$ is not connected then its critical edge density is simply the maximum of the critical edge densities of its components, so we will always assume that $H$ is connected.

\section{Previous work}
The first result in this area is due to Bondy et al.~\cite{BSTT} who considered the problem for triangles.
\begin{thm}[Bondy et al.~2006 \cite{BSTT}]
  The triangle $K_3$ has critical edge density $\dcrit(K_3)=\varphi\approx 0.618\ldots,$ the golden ratio.
\end{thm}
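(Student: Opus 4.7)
The theorem has two halves: the lower bound $\dcrit(K_3)\ge\varphi$ and the upper bound $\dcrit(K_3)\le\varphi$.

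For the lower bound I would exhibit an explicit triangle-free blow-up with density tending to $\varphi$. Take $|A_1|=1$ with single vertex $x_0$ and $|A_2|=|A_3|=n$; pick $N_2\subseteq A_2$ and $N_3\subseteq A_3$ each of size $\lfloor\varphi n\rfloor$; put $E_{12}=\{x_0\}\times N_2$, $E_{13}=\{x_0\}\times N_3$, and $E_{23}=(A_2\times A_3)\setminus(N_2\times N_3)$. Using the identity $\varphi^2=1-\varphi$, one checks $d_{12},d_{13}\to\varphi$ and $d_{23}=1-(\lfloor\varphi n\rfloor/n)^2\to\varphi$, while any $K_3$-transversal must pass through $x_0$ and so require an $A_2A_3$-edge inside $N_2\times N_3$, which is excluded.

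For the upper bound, fix a $K_3$-free blow-up $G$ with $d=d(G)$ and write $\gamma_x=|N_2(x)|/a_2$, $\delta_x=|N_3(x)|/a_3$ for $x\in A_1$, with analogous $\alpha_y,\beta_y$ and $\epsilon_z,\zeta_z$ in the other two classes. Triangle-freeness says that for each $x\in A_1$ the rectangle $N_2(x)\times N_3(x)$ contains no $E_{23}$-edge, giving the pointwise bound $\gamma_x\delta_x\le 1-d_{23}\le 1-d$ and its cyclic analogues; equivalently, for every edge $xy\in E_{12}$ the sets $N_3(x),N_3(y)$ are disjoint in $A_3$, so $\delta_x+\beta_y\le 1$. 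The first key step would then be: since $\varphi^2=1-\varphi$, if $d>\varphi$ any vertex $x$ with $\gamma_x,\delta_x>\varphi$ would satisfy $\gamma_x\delta_x>1-d$, contradicting the pointwise bound, so every $x\in A_1$ must have $\min(\gamma_x,\delta_x)\le\varphi$, and likewise for the other two classes.

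From here the plan is to combine this ``no doubly large vertex'' property with the density averages $\sum_x\gamma_x,\sum_x\delta_x\ge d\cdot a_1$ (and the cyclic versions) to force $d\le\varphi$. A single-class partition of $A_1$ into $\{x:\gamma_x\le\varphi\}$ and its complement only yields $d\le(1+\varphi)/2$, so the joint structure of the three bipartite graphs must be exploited. My preferred route is a Zykov-style symmetrisation: since any two vertices in the same class of $\hat{K_3}$ are non-adjacent, replacing a vertex by a clone of another preserves $K_3$-freeness, and iterating such clonings class-by-class so as never to decrease any $d_{ij}$ should drive $G$ into a symmetric form where $\gamma_x=\delta_x=d$ identically, collapsing the pointwise inequality to $d^2\le 1-d$ and hence $d\le\varphi$.

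The \textbf{main obstacle} is precisely this symmetrisation: cloning $x$ by $x'$ shifts $d_{12}$ and $d_{13}$ by $(\gamma_{x'}-\gamma_x)/a_1$ and $(\delta_{x'}-\delta_x)/a_1$ respectively, so in the absence of a Pareto-dominant clone one of the densities may drop. I would try to repair this by allowing fractional/weighted clones (convex combinations of neighbourhoods), which smooths out the trade-off; failing that, I would fall back on the entropy-compression framework that the present paper develops for arbitrary $H$, specialised here to $K_3$.
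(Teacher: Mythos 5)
The paper cites this theorem from Bondy, Shen, Thomass\'e and Thomassen without giving a proof, so there is no internal argument to compare against; note also that the paper's own Theorem~\ref{main:thm} cannot recover $\varphi$ for $K_3$ --- the second (sharper) bound is unavailable because $K_3$ is $2$-regular, and the first gives only $\dcrit(K_3)\le 1-1/(\alpha\Delta)=5/6$ with $\alpha(1,2)=3$. Your lower-bound construction is correct and complete: with $|A_1|=1$, a single vertex $x_0$, neighbourhoods $N_2,N_3$ of size $\lfloor\varphi n\rfloor$, and $E_{23}=(A_2\times A_3)\setminus(N_2\times N_3)$, every candidate transversal through $x_0$ would need an edge inside $N_2\times N_3$, which is excluded, and $d(G)=\lfloor\varphi n\rfloor/n\to\varphi$ since $1-(\lfloor\varphi n\rfloor/n)^2\ge 1-\varphi^2=\varphi$.

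The upper bound is where the gap lies, and it is a genuine one. The pointwise constraint $\gamma_x\delta_x\le 1-d$ you identify is right, but combining it with averaging in a single class cannot reach $\varphi$: maximising $\gamma+\delta$ subject to $\gamma\delta\le 1-d$, $\gamma,\delta\in[0,1]$ gives $\gamma_x+\delta_x\le 2-d$, hence $d_{12}+d_{13}\le 2-d$ and $d\le 2/3$, which already improves on your quoted $(1+\varphi)/2$ but is still strictly above $\varphi\approx 0.618$. (Indeed, for $k\ge 3$ the Markstr\"om--Thomassen value is exactly $k/(k+1)$, so the $2$-graph case $K_3$ is genuinely sharper than this product bound can see.) Your proposed Zykov-style cloning does not close this: the objective is $\min_{ij} d_{ij}$, not a linear functional of degrees, so a clone of $x$ by $x'$ with $\gamma_{x'}>\gamma_x$ but $\delta_{x'}<\delta_x$ raises $d_{12}$ at the expense of $d_{13}$, and a class may contain no Pareto-dominant vertex. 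Fractional clones (convex combinations of neighbourhood indicators) do not obviously resolve this either, since the relevant $(\gamma,\delta)$-Pareto front can remain non-degenerate in the limit; moreover convex mixing of neighbourhoods can create triangles that neither original vertex had, so $K_3$-freeness is not automatically preserved. What is actually needed is the reduction of Bondy et al.\ and Nagy (alluded to in Proposition~\ref{finite:prop}) that the problem is equivalent to a \emph{finite} weighted optimisation with a bounded number of vertex types per class, followed by an explicit solution of that small optimisation; your sketch stops short of both steps.
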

Later Nagy \cite{nagy} and Csikv\'ari and Nagy \cite{CsikvariNagy} gave exact results for trees and cycles as well as the following bound for the general graph version of the problem.
\begin{thm}[Csikv\'ari and Nagy 2012 \cite{CsikvariNagy}]\label{CN:thm}
  Let $H$ be a graph with maximum degree $\Delta$ and let $t(H)$ be the largest root of  its matching polynomial. Then the critical edge density satisfies
  \[
  \dcrit(H)\leq 1-\frac{1}{t(H)^2}.\]
  In particular,\begin{equation}\label{CN:eq}
  \dcrit(H)\leq 1-\frac{1}{4(\Delta-1)}.\end{equation}
 \end{thm}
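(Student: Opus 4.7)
The plan is to prove Theorem \ref{CN:thm} by an entropy compression argument of Moser--Tardos type, in the spirit of the paper's main results. Suppose for contradiction that $G\subseteq \hat{H}(n)$ is $H$-free with $d(G)=\delta > 1 - 1/t(H)^2$. Fix an ordering $v_1,\ldots,v_h$ of $V(H)$ in which every $v_i$ with $i\ge 2$ has an earlier neighbour, and draw a uniformly random string $R=r_1 r_2\cdots r_N\in [n]^N$ for large $N$. A randomised embedding procedure reads $R$ letter by letter: it maintains a partial transversal, and at each step tries to extend it by using the next letter to pick a vertex $u_i\in A_{v_i}$ for the next unembedded $v_i$. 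If $u_i$ fails to be joined in $G$ to some previously embedded neighbour $u_j$ (with $v_jv_i\in E(H)$), a failure is logged and the algorithm backtracks along a matching-like witness in $H$ rooted at $v_i$ before resuming with the next letter of $R$.

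The central structural claim is that any cascade of $M$ failures is recorded by a log whose combinatorial type is a rooted family of matchings of $H$, augmented by one chosen non-edge of $G$ per recorded conflict. Together with the final partial transversal, the log reconstructs $R$ uniquely (this is the compression step). The number of admissible non-edges per $e\in E(H)$ is at most $(1-\delta)n^2$, and the number of rooted matching-configurations of $H$ of size $M$ is, by a generating-function calculation built directly from $\mu_H$, of order $t(H)^{2M}$. This is the combinatorial significance of $t(H)$ being the largest root of the matching polynomial: via Godsil's path-tree theorem, $\mu_H$ is the characteristic polynomial of the path tree of $H$, so $t(H)$ is the growth rate of matching-walks.

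Comparing counts, the number of strings $R\in[n]^N$ producing a trace with $M$ failures is at most
\[
\mathrm{poly}(n)\cdot\bigl((1-\delta)\,t(H)^2\bigr)^M\cdot n^{N-M},
\]
and by $H$-freeness every $R$ yields a failure trace. Summing over $M\le N$ must therefore account for all $n^N$ strings. Once $(1-\delta)t(H)^2<1$, i.e.\ $\delta>1-1/t(H)^2$, this is impossible for $N$ large, a contradiction; this gives $\dcrit(H)\le 1-1/t(H)^2$. The inequality~\eqref{CN:eq} then follows from the Heilmann--Lieb bound $t(H)\le 2\sqrt{\Delta-1}$. The main obstacle is designing the backtracking rule precisely so that the log has the structure of a \emph{matching}-walk on $H$ rather than an arbitrary walk: this is what replaces the characteristic polynomial (whose largest root is $\Delta$-ish, giving a weaker bound) by the matching polynomial in the entropy count, and it is the delicate combinatorial heart of the argument.
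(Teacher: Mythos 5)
This statement is a cited theorem of Csikv\'ari and Nagy; the paper does not prove it. The paper's own contribution (Theorem~\ref{main:thm}) recovers, for $k=2$ and non-$\Delta$-regular $H$, only the \emph{weaker} consequence~\eqref{CN:eq}, via the bound $1-1/(\beta(\Delta-1))$ with $\beta(1,h-1)=4\cos^2\pi/(h+1)\le 4$. It does not recover the sharper bound $1-1/t(H)^2$ involving the matching polynomial. So you are not being asked to reproduce anything in the paper here; you are proposing a genuinely new entropy-compression proof of a result the authors merely cite.

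As written, the proposal has a real gap precisely at the step you flag yourself as ``the delicate combinatorial heart.'' The argument pivots on two unproved assertions: (i) that a backtracking rule can be chosen so that the log of conflicts is exactly a \emph{rooted matching-walk} of $H$ (equivalently, a walk in Godsil's path tree $T(H,v)$), and (ii) that the number of such structures with $M$ conflicts grows like $t(H)^{2M}$. Neither is established, and (i) is the one that does all the work. The paper's Algorithm~(B) is the natural starting point: it already splits the count into a Dyck-path factor (giving $\beta$, the squared top eigenvalue of a path $P_h$) and an independent per-fall edge-choice factor (giving $\Delta-1$), yielding $\beta(\Delta-1)$. To obtain $t(H)^2$ you cannot keep these two counts decoupled --- you need the Dyck path and the sequence of chosen edges to be counted \emph{jointly} as a single walk on the path tree, whose growth rate is $t(H)$. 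That requires the algorithm's state at each time to be canonically identified with a vertex of $T(H,v)$, with rises and falls moving down and up that tree; Algorithm~(B)'s evolving tree $T_t\subseteq H_2$ does not do this directly (it grows whole hyperedges, and for $k=2$ it tracks a subtree of $H_2$, not a path in the path tree). Absent the explicit rule, the injectivity of the compression map and the $t(H)^{2M}$ bound are both unsubstantiated. Minor further issues: the displayed bound $\mathrm{poly}(n)\cdot((1-\delta)t(H)^2)^M\cdot n^{N-M}$ does not match the bookkeeping of the compression argument (the trace together with the final partial transversal reconstructs \emph{all} $N$ letters, so there is no residual $n^{N-M}$ factor; one should instead compare the total number of traces against $n^N$, and $M$ must be summed over), and you should record the constraint $M\le N/2$ coming from the Dyck-path structure. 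The Heilmann--Lieb step at the end is fine once $1-1/t(H)^2$ is in hand.

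In short: the overall strategy is a reasonable and interesting attempt to push the paper's method past~\eqref{CN:eq} to the full Csikv\'ari--Nagy bound, but the proposal currently asserts rather than proves its key lemma, so it does not yet constitute a proof.
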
 
More recently Markstr\"om and Thomassen gave an exact answer for $K_{k+1}^{(k)}$. 
\begin{thm}[Markstr\"om and Thomassen 2019 \cite{Markstrom2019}]
  For $k\geq 3$, the complete $k$-uniform hypergraph of order $k+1$ has critical edge density
  \[
 \dcrit(K_{k+1}^{(k)})=\frac{k}{k+1}.\]\end{thm}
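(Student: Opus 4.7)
The plan is to establish matching upper and lower bounds $\dcrit(K_{k+1}^{(k)}) = k/(k+1)$.

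For the upper bound $\dcrit(K_{k+1}^{(k)}) \leq k/(k+1)$ I would apply a union bound. Labelling $V(H)=\{v_0,\ldots,v_k\}$ and $e_i = V(H) \setminus \{v_i\}$, suppose $G \subseteq \hat{H}$ satisfies $d(G) > k/(k+1)$; then each complementary density satisfies $d_{e_i}(\bar G) < 1/(k+1)$, where $\bar G = \hat H \setminus G$. A transversal candidate is a tuple $(u_0,\ldots,u_k) \in A_0 \times \cdots \times A_k$, of which there are $\prod_j a_j$. For each $i$, the number of candidates for which the edge $(u_j)_{j \neq i}$ is missing from $G$ is $|E(\bar G[e_i])| \cdot a_i$, so the proportion of 'blocked' candidates is exactly $d_{e_i}(\bar G)$. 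Union-bounding over the $k+1$ edges, the proportion of candidates with some missing edge is strictly less than $(k+1)\cdot 1/(k+1) = 1$, and hence at least one candidate yields a transversal $K_{k+1}^{(k)}$.

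For the lower bound $\dcrit(K_{k+1}^{(k)}) \geq k/(k+1)$, I would exhibit an explicit $H$-free blow-up $G$ with $d(G) = k/(k+1)$. The construction hinges on a combinatorial partition lemma: for some $n \geq 2$ there is a partition of $[n]^{k+1}$ into $k+1$ sets $T_0^*, \ldots, T_k^*$, each of size $n^{k+1}/(k+1)$, with $T_i^*$ a cylinder in direction $i$ (its membership depends only on the coordinates $j \neq i$). Given such a partition, take $A_j = [n]$ and for each $i$ remove from $G[e_i]$ those $k$-tuples $(u_j)_{j \neq i}$ lying in the projection $\pi_i(T_i^*) \subseteq [n]^k$, which has size $n^k/(k+1)$. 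This gives $d_{e_i}(G) = 1 - 1/(k+1) = k/(k+1)$, and any transversal candidate $(u_0,\ldots,u_k)$ lies in a unique $T_i^*$, forcing the corresponding $e_i$-edge to be absent, so $G$ is $H$-free. A matching lower bound then follows by taking larger blow-ups.

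The main obstacle is thus the partition lemma, and this is precisely where the restriction $k \geq 3$ enters. For $k = 3$ I would take $n = 2$ and exhibit an explicit partition of $\{0,1\}^4$ into four sets of four cells, each set consisting of two parallel hypercube edges in a common direction; its validity can be verified by direct case analysis. For general $k \geq 3$, naive linear attempts such as $T_i^* = \{u : \sum_{j \neq i} u_j \equiv c_i \pmod{k+1}\}$ produce cylinders of the correct size but fail to partition, so the construction must be genuinely nonlinear. I would look to build the required partition by taking $n$ to be a suitable (large) multiple of $k+1$ and constructing the cylinders greedily, exploiting the fact that the pairwise disjointness constraint on cylinders in different directions becomes feasible once $k+1 \geq 4$. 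The restriction to $k \geq 3$ is genuine: for $k = 2$ no such partition of $[n]^3$ into three axis-parallel cylinders of equal size exists, consistent with the known value $\dcrit(K_3) = \varphi < 2/3$ from Bondy et al.
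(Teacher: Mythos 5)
The paper does not prove this theorem; it cites Markstr\"om and Thomassen \cite{Markstrom2019} and uses the statement only as context. So there is no ``paper's own proof'' to compare against, and I will assess your proposal on its own merits.

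Your upper bound argument is correct and clean. If $d(G) > k/(k+1)$ then, writing $\bar G = \hat H \setminus G$, each $d_{e_i}(\bar G) < 1/(k+1)$, the proportion of $(k{+}1)$-tuples $(u_0,\dots,u_k)$ blocked by a missing $e_i$-edge is exactly $d_{e_i}(\bar G)$, and a union bound over the $k+1$ edges leaves a positive fraction of candidates unblocked, yielding a transversal. This gives $\dcrit(K_{k+1}^{(k)}) \leq k/(k+1)$ for every $k\geq 2$ (so it is not tight for $k=2$, consistent with $\dcrit(K_3)=\varphi$).

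The lower bound, however, has a genuine gap. You correctly reduce it to a combinatorial ``partition lemma'': a partition of $[n]^{k+1}$ into $k+1$ cylinders $T_0,\dots,T_k$ of equal size, where $T_i$ depends only on the coordinates $j\neq i$. Given such a partition, removing the projected slab from each $G[e_i]$ yields an $H$-free blow-up with all densities equal to $k/(k+1)$; every transversal candidate falls in some $T_i$ and hence lacks the $e_i$-edge. But the partition lemma itself is the entire content of the lower bound, and you only verify it for $k=3$ (where a $\{0,1\}^4$-partition into four cylinders of shadow-size $2$ does indeed exist, e.g.\ shadows $\{000,111\}$, $\{001,110\}$, $\{010,101\}$, $\{011,100\}$ in the appropriate coordinate triples). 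For general $k\geq 4$ you observe that the naive modular construction fails, and propose to ``construct the cylinders greedily,'' but no argument is given that the greedy approach terminates or that such a partition exists at all. Since you also (correctly) show that the analogous partition of $[n]^3$ is impossible --- which is exactly why the theorem excludes $k=2$ --- the existence of the partition is a nontrivial, $k$-dependent fact that cannot be waved away. Until this lemma is actually proved for all $k\geq 3$, the lower bound is not established, so the proposal as written does not constitute a proof of the theorem.

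A secondary, more minor point: for the densities to equal $k/(k+1)$ exactly, the cylinders must have exactly equal size, which imposes a divisibility condition on $n$ (e.g.\ $(k+1)\mid n$ suffices). Since $\dcrit$ is a supremum, near-equal sizes with densities $\to k/(k+1)$ would also do, but this flexibility should be made explicit if you go the approximate route. Also, ``taking larger blow-ups'' is unnecessary once a single $H$-free $G$ with $d(G)=k/(k+1)$ is exhibited; that already gives $\dcrit \geq k/(k+1)$.
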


Our work is closest to Theorem \ref{CN:thm}. Using an entropy compression argument we derive an upper bound for the critical edge density of all $k$-graphs for $k\geq 2$. 
\begin{thm}\label{main:thm}
  Let $H$ be a $k$-graph of order $h$ with maximum degree $\Delta$ and maximum disjoint degree $\Delta_0$, then there exists a constant $\alpha=\alpha(k,\Delta_0)$ such that\[
  \dcrit(H)\leq 1-\frac{1}{\alpha\Delta}.\]
  Unless $H$ is $\Delta$-regular there also exists a constant $\beta(k,h)$, such that
\[
  \dcrit(H)\leq 1-\frac{1}{\beta(\Delta-1)}.\]
  Both $\alpha,\beta \leq k(k/(k-1))^{k-1}<ke$.   (The exact values of $\alpha,\beta$ above can be found by solving two related generalised Dyck-path counting problems which we discuss in Section \ref{gendyck:sec}.) \end{thm}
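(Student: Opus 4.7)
The plan is an entropy-compression argument in the spirit of Moser. Assume for contradiction that $G \subseteq \hat H$ is an $H$-free blow-up with $d(G) \geq 1-\varepsilon$ for $\varepsilon$ slightly below $1/(\alpha\Delta)$. I run the following randomised search for an $H$-transversal: maintain a tuple $(x_v)_{v \in V(H)}$ with $x_v \in A_v$, each initially drawn from a random tape whose entries are i.i.d.\ uniform on the relevant $A_v$. At each step I test whether $(x_v)$ is a transversal copy of $H$ in $G$; if not, I select a bad edge $e^\star \in E(H)$ by a fixed priority rule and re-sample the $k$ vertices of $e^\star$ from the next tape entries. For the sharper $\beta(\Delta-1)$ bound, available when $H$ is not $\Delta$-regular, I would instead root the search at a vertex $v^\star$ of degree strictly less than $\Delta$, never re-sample $x_{v^\star}$, and only re-sample the $k-1$ non-pivot vertices of $e^\star$, so that each step triggers at most $\Delta-1$ potential new bad edges at each freshly exposed vertex.

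The information-theoretic heart is that after $T$ steps without success the prefix of tape consumed can be reconstructed from (a) the current tuple $(x_v^{(T)}) \in \prod_v A_v$, and (b) a step-by-step log recording which bad edge was chosen and which missing $k$-tuple triggered the re-sampling. Since $d_e(G) \geq 1-\varepsilon$, the number of missing $k$-tuples in any edge-class is at most $\varepsilon\prod_{v \in e}|A_v|$, so each log entry uses $\log(1/\varepsilon)$ fewer bits than the tape supplied at that step. The sequence of logged bad edges is constrained: each re-sampling exposes $k$ (respectively $k-1$) fresh vertices awaiting \emph{confirmation}, and each subsequent bad edge must involve one of them, so the logs biject with a class of generalised Dyck paths whose up-step sizes and edge-labels reflect $\Delta$ (or $\Delta_0$) and $k$. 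Section~\ref{gendyck:sec} will show that the number of such walks of length $T$ grows like $\gamma^T$ with $\gamma \leq k(k/(k-1))^{k-1}$, the growth rate of $k$-ary Fuss--Catalan trees, obtained by Lagrange inversion on a polynomial tree equation.

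Combining, the log plus final state encodes the tape using roughly $(\gamma\varepsilon\Delta)^T\prod_v|A_v|$ options, which is incompatible with tape uniformity once $\gamma\varepsilon\Delta < 1$; hence $\alpha \leq \gamma$, and likewise $\beta \leq \gamma$ in the non-regular variant with $\Delta-1$ in place of $\Delta$. The refinement $\alpha = \alpha(k,\Delta_0)$ arises because only pairwise-disjoint neighbours of a vertex can simultaneously be exposed, so $\Delta$ inside the Dyck-path count can be tightened to $\Delta_0$; the $h$-dependence of $\beta$ reflects the choice of pivot $v^\star$ among the $h$ vertices of $H$ and the need for the local analysis near that pivot to close up globally.

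The main obstacle will be setting up the log so that it is genuinely losslessly decodable in reverse. The selection rule for $e^\star$, the push/pop discipline on the stack of freshly exposed vertices, and the encoding of which missing $k$-tuple occurred all have to interlock so that, from $(x_v^{(T)})$ alone, one can read the log backwards and recover every past tape entry uniquely. Forcing the failure traces to biject with the specific Dyck-path class counted in Section~\ref{gendyck:sec} — rather than with a strictly larger set — is what pins the constant down to $k(k/(k-1))^{k-1}$ rather than a cruder exponential bound; everything else in the argument is a comparison of logarithms.
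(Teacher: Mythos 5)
The high-level template — entropy compression plus a count of restricted lattice paths — is the right one, but the algorithm you propose is genuinely different from the paper's, and the specific differences create gaps that are not bridged by your sketch.

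You propose to start with a full tuple $(x_v)_{v\in V(H)}$ and repeatedly \emph{resample} the $k$ vertices of a chosen bad edge, in the style of Moser--Tardos. The paper instead runs an \emph{incremental build-up}: it maintains a partial $H$-transversal $P_t$, always tries to add one vertex to the smallest unfilled class $i_t$, and on failure removes the $k$ vertices of the offending edge $P_t(e)$. This produces a monotone stack structure in which $|P_t|$ changes by $+1$ on a success and by $-(k-1)$ on a failure, so the record is literally a partial $(k-1)$-Dyck path of length $s$, with height bounded by $h-1$ (since $P_t$ is never a full transversal) and with a maximum-descent bound of $\Delta_0$ (consecutive failures concern the same fixed class $i_t$ and their edges meet pairwise only at $i_t$, hence must be pairwise disjoint in $\Gamma(i_t)$). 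Each step consumes exactly one tape entry, and the number of candidate bad edges at a failure step is at most $\Delta$ because the offending edge must contain the deterministic vertex $i_t$. None of this transfers automatically to your setup: with a full tuple and edge-resampling the tape advances by $k$ entries per step, the bad edge at the next step need not contain a fixed vertex, and the ``stack of freshly exposed vertices awaiting confirmation'' is not spelled out precisely enough to give a Dyck-path bijection with the required height and descent bounds. In particular, your claim that ``only pairwise-disjoint neighbours of a vertex can simultaneously be exposed'' is the crucial point behind $\Delta_0$, and in the resampling setup this has no clear meaning.

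Your sketch of the $\beta$ bound is also off-route. You propose to fix a low-degree pivot $v^\star$ and never resample it, but the paper's Algorithm (B) does something structurally different: it maintains a \emph{spanning tree} $T_t$ of the skeleton $H_2$, always fills the smallest leaf of the current tree, and on failure re-grows the tree by grafting the offending edge's vertices as new leaves. This tree discipline guarantees that every failure either occurs at the special vertex $h$ (degree $\le \Delta-1$) or at a leaf with a parent $p\in T_{t-1}$ that is forbidden to lie in the offending edge, which is exactly what reduces the edge count from $\Delta$ to $\Delta-1$. The dependence of $\beta$ on $h$ then comes from bounding Dyck paths by height $h-1$ (Lemma~\ref{bound2:lem}), not from ``the choice of pivot among $h$ vertices.'' Finally, the arithmetic in your closing paragraph doesn't balance: the tape you consume is of size $n^{kT}$ (not $n^T$) in the resampling model, while the Dyck-path count contributes only $\gamma^t$ where $t$ is the number of \emph{falls}, which is at most $s/k$ in the paper's length-$s$ path; matching exponents is exactly what produces the clean condition $\alpha\Delta(1-d)<1$, and your version does not reproduce it. The ingredients are identified correctly, but the algorithm you would need to run and the reconstruction argument both have to be changed to the incremental form before the counting closes.
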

For $k=2$ we have $\beta\leq 4\cos^2\pi/(h+1)$ and so recover  (\ref{CN:eq}), the weaker of the two bounds from Theorem \ref{CN:thm} \cite{CsikvariNagy} in the case when $H$ is not regular.

We also have specific bounds for many complete $k$-graphs.
\begin{thm}\label{complete:thm}
  If $1\leq l<k$ then the complete $k$-graph of order $k+l$ satisfies
  \[
1-\frac{1}{\Delta}\leq  \dcrit(K_{k+l}^{(k)})\leq 1-\frac{1}{(l+1)\Delta}.\]
\end{thm}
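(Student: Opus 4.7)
The plan is to prove the two bounds independently. The upper bound follows from a short first-moment argument exploiting the regularity of $K_{k+l}^{(k)}$, and the lower bound from a standard explicit construction localised at a single vertex.

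For the lower bound, I would exhibit an $H$-free blow-up of $H=K_{k+l}^{(k)}$ with $d(G)=1-1/\Delta$. Fix any vertex $v^\ast\in V(H)$ and let $e_1,\ldots,e_\Delta$ be the edges of $H$ through $v^\ast$. Take $|A_{v^\ast}|=\Delta$ with $A_{v^\ast}=\{u_1,\ldots,u_\Delta\}$, and for each $v\ne v^\ast$ let $A_v=\{a_v\}$ be a singleton. Build $G$ from $\hat H$ by removing, for each $i$, the single $k$-edge $\{u_i\}\cup\{a_w:w\in e_i\setminus\{v^\ast\}\}$. A transversal is determined by the choice of some $u_j\in A_{v^\ast}$, and the unique edge of $H$ that fails to be realised by that transversal is precisely $e_j$; hence $G$ is $H$-free. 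Each $G[e_i]$ loses one edge out of $\Delta$, while each $G[e]$ with $e\not\ni v^\ast$ is complete, so $d(G)=1-1/\Delta$.

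For the upper bound, I would pick a random transversal $(t_v)_{v\in V(H)}$ by selecting each $t_v\in A_v$ independently and uniformly. Assuming $d_e(G)>1-1/((l+1)\Delta)$ for every $e\in E(H)$, the count $X$ of edges $e\in E(H)$ for which $(t_v)_{v\in e}\notin E(G)$ satisfies, by linearity of expectation,
\[\mathbb{E}[X]=\sum_{e\in E(H)}\bigl(1-d_e(G)\bigr)<\frac{|E(H)|}{(l+1)\Delta}.\]
Vertex-edge double counting in the regular hypergraph $K_{k+l}^{(k)}$ gives $|E(H)|/\Delta=(k+l)/k$, so $\mathbb{E}[X]<(k+l)/(k(l+1))$. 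The inequality $k+l<k(l+1)$ is equivalent to $l(k-1)>0$, which holds under the hypothesis $1\le l<k$ (it forces $l\ge 1$ and $k\ge 2$). Therefore $\mathbb{E}[X]<1$, and because $X$ is a non-negative integer-valued random variable, some outcome has $X=0$; that transversal embeds every edge of $H$ into $G$.

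I do not expect a serious obstacle: this theorem is essentially the first-moment method in the highly symmetric complete-hypergraph setting. The entire numerical content reduces to the elementary identity $|E(K_{k+l}^{(k)})|/\Delta=(k+l)/k$ together with the observation that $(k+l)/(k(l+1))<1$ precisely under the stated hypothesis.
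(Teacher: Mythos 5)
Your proof is correct and takes a genuinely different route from the paper's. For the upper bound the paper runs Algorithm (A), the entropy-compression argument from the proof of Theorem~\ref{main:thm}, observing that for $H=K_{k+l}^{(k)}$ the relevant partial $(k-1)$-Dyck paths of bounded height $k+l-1$ can be counted exactly, yielding the constant $l+1$ in place of $\alpha$. You instead apply a one-line first-moment (union bound) calculation over a uniform random transversal. The two approaches buy different things. Tracking the constants, your argument in fact yields the sharper bound $\dcrit(K_{k+l}^{(k)})\leq 1-k/((k+l)\Delta)$, which improves on the stated $1-1/((l+1)\Delta)$ because $k(l+1)-(k+l)=l(k-1)>0$, and for $l=1$ it recovers exactly the Markstr\"om--Thomassen value $\dcrit(K_{k+1}^{(k)})=k/(k+1)$. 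The reason the paper does not use the union bound throughout is that for a $\Delta$-regular $k$-graph on $h$ vertices the union bound only gives $\dcrit\leq 1-k/(h\Delta)$, whereas the entropy-compression bound $1-1/(\alpha\Delta)$, with $\alpha<ke$ independent of $h$, remains useful when $h$ is much larger than $k$; here $h=k+l<2k$ is small enough that the crude method wins. For the lower bound, your explicit construction localised at one blown-up vertex is correct and self-contained; the paper instead cites Proposition~\ref{finite:prop}, which is stated without proof as a generalisation of Corollary 3.8 of Nagy.
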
 
 
Interestingly, while these bounds are similar in form to the
previous bound for $2$-graphs, they are derived in a completely different
way. We use the entropy compression technique introduced by Moser and Tardos \cite{Moser}. 

The key ingredient is an algorithm which when given $G$, an $n$-blow-up of a $k$-graph $H$, searches for an $H$-transversal in $G$. This algorithm halts if and only if it finds such an $H$-transversal. As it runs, the algorithm consumes a sequence $(z_t)_{t=1}^s$ of integers and
maintains a record $(r_t)_{t=1}^s$ of its actions as well as a partial $H$-transversal $P_t$. We will show that using this record $(r_t)_{t=1}^s$ together with the final partial $H$-transversal $P_s$  it is possible to reconstruct the original sequence $(z_t)_{t=1}^s$ and so the algorithm can be viewed as a compression algorithm for integer sequences. We show that if $G$ is sufficiently dense and the search algorithm fails to halt, then this compression algorithm is simply too good to be true. This is an example of an \emph{entropy compression} argument (this terminology seems to have first been introduced by Tao \cite{Tao}).

In order to give the proof we will need some auxiliary results on generalised Dyck paths. The reader may skip ahead to Section \ref{proof:sec} for the proofs of Theorems \ref{main:thm} and \ref{complete:thm} and refer back to these results as necessary. 

We note that all our results concern upper bounds for the density Tur\'an problem.   Moving to a slightly more general setting, where one considers weighted hypergraphs, it is straightforward to generalise earlier results due to Bondy et al.~\cite{BSTT} for tripartite graphs  and due to Nagy \cite{nagy} for general graphs, showing that computing this critical edge density is in fact a finite optimisation problem. This in turn implies the following simple lower bound that we state without proof.
\begin{prop}\label{finite:prop} 
  If $H$ is a $k$-graph with maximum degree $\Delta$ then \[
  \dcrit(H)\geq 1-\frac{1}{\Delta}.\]
\end{prop}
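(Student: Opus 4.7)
The plan is to exhibit a single explicit $H$-free blow-up $G$ of $H$ with $d(G)=1-1/\Delta$. Since $\dcrit(H)$ is defined as the supremum of $d(G')$ over all $H$-free blow-ups $G'$, this alone gives the claimed lower bound; no probabilistic input or weighted optimisation is needed.

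Let $v\in V(H)$ be a vertex of maximum degree $\Delta$, and list the edges of $H$ incident to $v$ as $e_1,\ldots,e_\Delta$, writing $e_i=\{v\}\cup B_i$ with each $B_i\in \binom{V(H)\setminus\{v\}}{k-1}$. Take the complete blow-up $\hat H$ with $|A_v|=\Delta$, labelling $A_v=\{v_1,\ldots,v_\Delta\}$, and $|A_w|=1$ for every $w\neq v$. Form $G$ from $\hat H$ by deleting, for each $i\in\{1,\ldots,\Delta\}$, the unique edge of $\hat H[e_i]$ that uses $v_i$, namely the $k$-set $\{v_i\}\cup\bigcup_{w\in B_i}A_w$.

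Two checks then suffice. For density: if $e\in E(H)$ with $v\notin e$, then $G[e]=\hat H[e]$ since every deleted edge involves some $v_j\in A_v$, so $d_e(G)=1$. If $e=e_i$, then $\hat H[e_i]$ contains exactly $\Delta\cdot 1^{k-1}=\Delta$ edges, of which we removed one, so $d_{e_i}(G)=1-1/\Delta$. Hence $d(G)=1-1/\Delta$. For $H$-freeness: any $H$-transversal is forced by $|A_w|=1$ (for $w\neq v$) to take the unique vertex of each $A_w$, together with some $v_j\in A_v$. The edge $e_j\in E(H)$ then requires $\{v_j\}\cup\bigcup_{w\in B_j}A_w$ to be present in $G$, but this is precisely the edge we deleted.

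I expect no real obstacle: the proof reduces to one explicit construction followed by two line-by-line verifications. The only subtlety worth flagging is that the $\Delta$ deleted edges are genuinely distinct (they use distinct vertices $v_1,\ldots,v_\Delta$ of $A_v$), so the count $|E(\hat H[e_i])|-1=\Delta-1$ is exact rather than an overestimate; this is immediate.
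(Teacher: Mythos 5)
Your construction is correct and self-contained. The paper disposes of this proposition with a one-line reference, framing it as a consequence (via Nagy's Corollary 3.8) of the fact that computing the critical edge density is a finite optimisation problem over weighted hypergraphs; you instead exhibit a single explicit $H$-free blow-up attaining $d(G)=1-1/\Delta$, which is more elementary and makes the lower bound transparent. The verification holds up: with $|A_v|=\Delta$ and singleton classes elsewhere, each $\hat H[e_i]$ has exactly $\Delta$ edges; the $\Delta$ deleted $k$-sets lie in pairwise edge-disjoint subhypergraphs $\hat H[e_1],\ldots,\hat H[e_\Delta]$ (any edge of $\hat H$ determines the $k$ classes it meets, hence the unique $e\in E(H)$ whose blow-up it belongs to), so $G[e_i]$ loses exactly one edge; edges of $H$ not containing $v$ retain density $1$; and any would-be transversal must pick some $v_j$, forcing it to use the deleted edge of $\hat H[e_j]$. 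One small correction to your closing remark: the exactness of the density count is not really driven by the distinctness of $v_1,\ldots,v_\Delta$ but by the edge-disjointness of the $\hat H[e_i]$; the distinctness of the $v_i$ is instead what is essential for the $H$-freeness step, since it guarantees that every choice of representative in $A_v$ is blocked by some deleted edge.
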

\begin{proof}
  This is a simple generalisation to $k$-graphs of Corollary 3.8 \cite{nagy}.
\end{proof}
 
\section{Generalised Dyck paths}\label{gendyck:sec} 
For any integer $m\geq 1$, a \emph{partial $m$-Dyck path} is a path in the upper half-plane of the 2-dimensional integer lattice starting at $(0,0)$ using steps $\dycku\,\,=(1,1)$, a \emph{rise}, and $\dyckm\,\,=(1,-m)$, an \emph{$m$-fall}. The $y$-coordinate of any point on the path is known as its \emph{level}.  The \emph{height} of a path is the maximum level reached.  If the path ends on the horizontal axis, i.e.~at level zero, then it is called a \emph{full $m$-Dyck path}. The \emph{length} of the path is the number of steps. A longest sequence of consecutive $m$-falls in a partial $m$-Dyck path is called a \emph{maximum descent}.

\begin{figure}[h]
  \centering
\includegraphics[width=12cm]{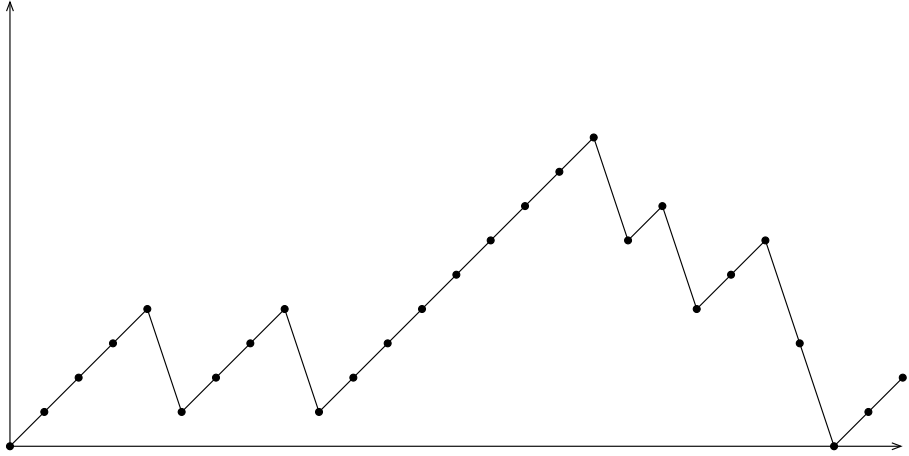}
\caption{A partial 3-Dyck-path of height 9, length 26 and max descent $2$.}
\end{figure}

We will be interested in counting partial $m$-Dyck paths of bounded height and bounded maximum  descent.

Given integers $h,m,s,d,l\geq 0$  let $\mathcal{D}_m(s,l,h,d)$ denote the set of partial $m$-Dyck paths ending at  $(s,l)$, bounded by height $h$ and with maximum descent at most $d$. We will also be interested in paths with no restriction on height or maximum descent in which case we will replace the argument $h$ or $d$ by $\cdot$. 

\begin{lem}\label{full:lem}
  Given integers $h\geq m$ and $s,d,l\geq 1$ such that $\max\{d,m\}\geq 2$
  there exists $t\leq \lfloor \frac{s+4h}{m+1}\rfloor$ such that  \[|\mathcal{D}_m(s,l,h,d)|\leq |\mathcal{D}_m(t(m+1),0,h,d)|.\] 
\end{lem}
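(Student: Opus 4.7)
I would build an injection $\phi:\mathcal{D}_m(s,l,h,d)\hookrightarrow\mathcal{D}_m(t(m+1),0,h,d)$ of the form $\phi(P)=P\cdot T$, where $T=T(l,m,h,d)$ is a fixed ``canonical tail'' depending only on $l,m,h,d$ and not on $P$; such a map is automatically injective since $P$ is recovered by deleting the last $|T|$ steps of $\phi(P)$. The plan is to set $t:=\lfloor(s+4h)/(m+1)\rfloor$ and $L:=t(m+1)-s\in[4h-m,\,4h]$, and to design $T$ of length exactly $L$ taking level $l$ down to $0$ while staying in $[0,h]$ and respecting the max-descent bound $d$ everywhere, including at the join with $P$.

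The tail will have the form $T=D\cdot B_0^{\,k}$, where $B_0:=\uparrow^m\downarrow_m$ is a ``loop at zero'' of length $m+1$ (legal since $h\ge m$ and its only descent has length $1\le d$) and $D$ is a ``descent block'' from level $l$ to level $0$ whose length I arrange to satisfy $|D|\le 3h$. Any path from $l$ to $0$ automatically has length $\equiv -l\pmod{m+1}$ (from $a'-mb'=-l$ and $a'+b'=|D|$), and the existence of a path in $\mathcal{D}_m(s,l,h,d)$ forces $s\equiv l\pmod{m+1}$, so $L\equiv -l\pmod{m+1}$ too; therefore $k:=(L-|D|)/(m+1)$ is a nonnegative integer once $|D|\le L$, which is guaranteed by $|D|\le 3h$ and $L\ge 4h-m\ge 3h$ (using $h\ge m$).

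To produce $D$ with $|D|\le 3h$ I would split into cases. For $l\le m$ take $D=\uparrow^{m-l}\downarrow_m$ of length $m+1-l\le h+1$. For $l>m$ the hypothesis $\max(d,m)\ge 2$ lets me pick an efficient descent sub-block to iterate: if $d\ge 2$ use $\downarrow_m^{\,d}\uparrow$ of length $d+1$ with net descent $dm-1\ge 1$ (ratio $(d+1)/(dm-1)\le 3$); if $d=1$ but $m\ge 2$ use $\downarrow_m\uparrow$ of length $2$ with net descent $m-1\ge 1$ (ratio $2/(m-1)\le 2$). Iterating the sub-block while the current level stays above its threshold, and then cleaning up with a short run of $m$-falls together with a finishing $\uparrow^{m-l''}\downarrow_m$ block, gives $|D|\le 3l+O(m)\le 3h$, with the level never leaving $[0,l]\cup[0,m]\subseteq[0,h]$. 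To avoid creating a run of more than $d$ consecutive $m$-falls at the junction between $P$ and $D$, I would tweak $D$ to start with an $\uparrow$ whenever $l<h$ (using a minor variant of the first block), which adds only a constant length; if $l=h$ then $P$ must end with $\uparrow$ anyway, so no modification is needed.

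\textbf{Main obstacle.} The central difficulty is producing a descent block of length $O(h)$ uniformly across the parameter ranges, and this is precisely where $\max(d,m)\ge 2$ is indispensable: with $d=m=1$ every $\uparrow\downarrow_1$ pair has zero net change, so no tail of length $O(h)$ can descend from level $l\ge 2$ at all. The remaining work is bookkeeping: verifying that the descent sub-blocks and cleanup keep the level inside $[0,h]$, that no run of $m$-falls ever exceeds $d$ (in particular at the $P$--$D$ junction, the sub-block junctions, and before the first loop $B_0$), and that the boundary cases ($l=m$, $h=m$, $l=h$) still yield $|D|\le 3h\le L$.
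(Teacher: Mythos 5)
Your strategy is the same as the paper's: append to each path a fixed tail, depending only on $(l,m,h,d)$, that returns the path to level $0$ while staying in $[0,h]$ with max descent at most $d$; the map is injective because the tail can be deleted to recover the original path. The paper's tail is simpler than yours: for $m\geq 2$, first lift to level $h$ via $\dycku^{h-l}$, then oscillate down via $(\dyckm\dycku)^{j}$ with $j\approx h/(m-1)$, landing in $\{0,\ldots,m-2\}$, then finish with one $\dycku^{m-l_0}\dyckm$; for $m=1$, use $\dycku^{h-l}(\dycko^2\dycku)^{h-2}\dycko^2$, which needs $d\geq 2$. The ``lift to $h$ first'' step is worth adopting: it uniformises the starting level, so the descent analysis is independent of $l$ and your case split on $l\leq m$ versus $l>m$ disappears, and it makes the junction with the original path automatic (if $l<h$ the tail starts with a rise; if $l=h$ then the path must end with a rise since an $m$-fall into level $h$ would come from $h+m>h$). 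Your explicit choice $t:=\lfloor(s+4h)/(m+1)\rfloor$ together with padding by loops $B_0^k$ at level $0$ is a tidy way to hit a fixed target, but is unnecessary: the lemma only asks for \emph{some} $t\leq\lfloor(s+4h)/(m+1)\rfloor$, and since the tail ends at level $0$ the total length is automatically a multiple of $m+1$, so $t$ can be read off after the fact.

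The concrete gap is in the bound $|D|\leq 3h$ when $l>m$. Your sub-block analysis gives a steps-per-unit-of-descent ratio of at most $3$, hence roughly $3l$ steps to descend from level $l$, plus $O(m)$ for cleanup; when $l$ is close to $h$ this totals $3h+\Theta(m)$, which exceeds $3h$. Your budget is $L\geq 4h-m$, so you would need $3h+\Theta(m)\leq 4h-m$, i.e.\ roughly $2m\lesssim h$, and the hypothesis $h\geq m$ does not give you this. The ``lift-to-$h$'' tail sidesteps the issue cleanly: the lift costs $h-l\leq h-1$ steps, the oscillation $(\dyckm\dycku)^j$ for $m\geq 2$ costs $2j\leq 2h/(m-1)\leq 2h$ steps, and the final cleanup costs at most $m+1\leq h+1$ steps, giving a total of at most $4h$ regardless of how $m$ compares to $h$. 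I would rewrite the $l>m$ branch of your construction along these lines, after which the remaining bookkeeping (non-negativity during the oscillation, junction runs of $m$-falls) is straightforward.
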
 
\begin{proof} We  define an injective  mapping from $\mathcal{D}_m(s,l,h,d)$ to $\mathcal{D}_m(t(m+1),0,h,d)$, for some $t=t(m,l)$. We do this by describing a fixed extension of each path in the domain that depends only on $m$ and $l$  using at most  $4h$ additional steps. In each case this yields a full $m$-Dyck path and hence  has length $t(m+1)$ for some $t\leq \lfloor \frac{s+4h}{m+1}\rfloor$. Moreover the new path is constructed so that it still has height at most $h$ and maximum descent at most $d$.

Fix a partial $m$-Dyck path from $\mathcal{D}_m(s,l,h,d)$ ending at $(s,l)$. Starting from $(s,l)$, we extend this path as follows. If $m\geq2 $ first add $\dycku^{h-l}$ i.e.~$h-l$ rises. Next let $j=\lfloor h/(m-1)\rfloor$ and add the following $2j$ steps $(\dyckm\dycku)^j$. This takes us to level $0\leq l_0\leq m-2$. Either $l_0=0$ and we are done or add $\dycku^{m-l_0}\dyckm$ to give a full $m$-Dyck path with at most $4h$ additional steps.

For $m=1$ we start by adding $\dycku^{h-l}$. Next we use the fact that $d\geq 2$ to add $(\dycko^2\dycku)^{h-2}\dycko^2$ which gives a full $1$-Dyck path with at most  $4h$ additional steps.

In each case note that the extended path never exceeds height $h$. Moreover the maximum descent in each extended path is still at most $d$.
  \end{proof}
\begin{lem}\label{bound1:lem} 
  Given integers $m,t,d\geq 1$ such that $\max\{d,m\}\geq 2$. Let $\phi(x)=\sum_{i=0}^dx^{mi}$ and let $\tau$ be the unique positive solution of $\phi(x)=x\phi'(x)$. If $\alpha(m,d)=(\phi'(\tau))^m$ then there exists a constant $c_{a}=c_{a}(m,d)$ such that 
  \[|\mathcal{D}_m(t(m+1),0,\cdot,d)|\leq c_a\alpha^{t}.\]        
Moreover $\alpha\leq \gamma_m=(m+1)(1+1/m)^m$.
\end{lem}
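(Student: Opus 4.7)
The plan is to derive an algebraic functional equation for $C(y) := \sum_{t\ge0}a_t y^t$, where $a_t := |\mathcal{D}_m(t(m+1),0,\cdot,d)|$, and then extract the growth rate of the coefficients via Lagrange inversion together with a Cauchy bound. For the functional equation I will use the classical last-return decomposition refined by trailing $D$-block size. Every nonempty full $m$-Dyck path $P$ factors uniquely as $P = P_0 U P_1 U \cdots U P_m D$, where $P_i$ is the full $m$-Dyck path traced between successive ``final'' $U$-steps lifting the level from $i-1$ to $i$ en route to the terminal $D$; in the unrestricted setting this yields the classical identity $C = 1 + yC^{m+1}$. Since the trailing $D$-block of $P$ has size one greater than that of $P_m$, imposing max descent $\le d$ on $P$ is equivalent to requiring $P_m$'s trailing block to have size $\le d-1$, with $P_0,\ldots,P_{m-1}$ unrestricted. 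Letting $C^{(j)}$ denote the GF for full paths with trailing block exactly $j$, this gives $C^{(j)} = yC^m C^{(j-1)}$ for $1 \le j \le d$ with $C^{(0)}=1$, hence $C^{(j)}=(yC^m)^j$, and summing yields
\[
  C \;=\; \sum_{j=0}^d (yC^m)^j \;=\; \psi(yC^m),\qquad \psi(x):=\sum_{j=0}^d x^j,
\]
with $\phi(x)=\psi(x^m)$ recovering the polynomial of the lemma.

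Setting $u:=yC^m$ converts this into $y=H(u):=u/\psi(u)^m$, so $C(y)=\psi(u(y))$ is obtained from the compositional inverse of $H$. The smallest positive singularity of $C$ lies at $y^\star=H(u^\star)$ where $H'(u^\star)=0$, i.e.\ $\psi(u^\star)=mu^\star\psi'(u^\star)$; with $\tau:=(u^\star)^{1/m}$ the identities $\phi(\tau)=\psi(\tau^m)$ and $\tau\phi'(\tau)=m\tau^m\psi'(\tau^m)$ reduce this to the lemma's defining equation $\phi(\tau)=\tau\phi'(\tau)$. Existence and uniqueness of a positive $\tau$ follow because the elasticity $x\phi'(x)/\phi(x)$ is strictly increasing from $0$ to $dm$ on $(0,\infty)$ (using positivity of the coefficients of $\phi$), and the target value $1$ lies in $(0,dm)$ precisely when $\max\{d,m\}\ge2$. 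A direct calculation gives $\alpha=1/y^\star=\psi(u^\star)^m/u^\star=m^m(u^\star)^{m-1}\psi'(u^\star)^m=\phi'(\tau)^m$, as asserted. For the coefficient estimate, Lagrange inversion applied to $u=y\psi(u)^m$ yields
\[
  a_t \;=\; [y^t]\psi(u(y)) \;=\; \tfrac{1}{t}\,[u^{t-1}]\bigl(\psi'(u)\psi(u)^{mt}\bigr),
\]
and since $\psi'\psi^{mt}$ is a polynomial with nonnegative coefficients, applying the Cauchy bound $[u^{t-1}]F(u)\le F(r)/r^{t-1}$ at $r=u^\star$ (together with $\psi(u^\star)^m=\alpha u^\star$) gives $a_t\le u^\star\psi'(u^\star)\alpha^t/t$. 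Choosing $c_a:=\max\{1,u^\star\psi'(u^\star)\}$ then covers all $t\ge0$.

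Finally, $\alpha\le\gamma_m$ follows from monotonicity: $\mathcal{D}_m(s,0,\cdot,d)\subseteq\mathcal{D}_m(s,0,\cdot,d+1)$ gives $\alpha(m,d)\le\alpha(m,d+1)$, and the limit $\alpha(m,\infty)$ is attained when $\phi$ is replaced formally by $(1-x^m)^{-1}$, yielding $\tau^m=1/(m+1)$ and $\phi'(\tau)^m=(m+1)^{m+1}/m^m=\gamma_m$. The main obstacle I anticipate is not conceptual but bookkeeping: carefully verifying the uniqueness of the decomposition $P=P_0U\cdots UP_mD$ (including empty $P_i$ and the edge case $P_m=\emptyset$) and the consistent tracking of trailing blocks, plus checking that $r=u^\star$ is admissible in the Cauchy bound --- the latter is automatic since $\psi$ is a polynomial and everything converges on every circle, so the remaining work is routine generating-function manipulation and the $\phi$--$\psi$ translation.
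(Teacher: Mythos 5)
Your proof is correct, but it takes a genuinely different route from the paper. The paper's proof is short by citation: it converts a full $m$-Dyck path with $t$ $m$-falls and maximum descent at most $d$ into a $1$-Dyck path of length $2tm$ whose descent lengths all lie in $E=\{m,2m,\ldots,dm\}$ (replacing each $m$-fall by $m$ unit falls), and then invokes Lemma~8 of Esperet and Parreau (which rests on Drmota's analysis of algebraic generating functions) as a black box, which supplies precisely the constant $c_E$ and growth rate $\phi_E'(\tau)^m$. You instead give a self-contained derivation: the last-passage/Fuss--Catalan decomposition $P=P_0U\cdots UP_mD$ refined by the size of the trailing fall-block to get $C=\psi(yC^m)$ with $\psi(x)=\sum_{j=0}^d x^j$, then Lagrange inversion $a_t=\frac1t[u^{t-1}]\psi'(u)\psi(u)^{mt}$, then the Cauchy bound at the saddle $u^\star$ satisfying $\psi(u^\star)=mu^\star\psi'(u^\star)$, which translates via $u^\star=\tau^m$ into the paper's condition $\phi(\tau)=\tau\phi'(\tau)$ and gives $\alpha=\phi'(\tau)^m$. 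This is essentially a re-proof of the cited Esperet--Parreau/Drmota machinery in the needed special case, which is a legitimate and in some ways more informative route, and it even yields the stronger bound $a_t\le u^\star\psi'(u^\star)\alpha^t/t$. One small wording issue: you say $P_0,\ldots,P_{m-1}$ are ``unrestricted,'' but in the refined decomposition they must still have maximum descent at most $d$ (only the trailing-block restriction is dropped); your functional equation $C^{(j)}=yC^mC^{(j-1)}$ with $C$ the bounded-descent GF already encodes this correctly, so the argument is sound, but the prose should say ``unrestricted trailing block'' rather than ``unrestricted.'' Also, your monotonicity argument for $\alpha\le\gamma_m$ implicitly uses that $\alpha(m,d)$ is the \emph{exact} exponential growth rate (so that set inclusion forces monotonicity), which follows from your singularity analysis (Pringsheim plus the algebraic-GF schema) but is worth flagging; the paper handles this step just as briefly.
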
    
\begin{proof}  The fact that $\alpha\leq \gamma_m$ follows by noting that $\gamma_m=\alpha(m,\infty)$ (i.e.~the value of $\alpha$ obtained by setting $d=\infty$ in the sum $\phi(x)$).
  
We use Lemma 8 from Esperet and Parreau \cite{EsperetParreau}, that in turn uses the work of Drmota \cite{Drmota}.

Counting full $m$-Dyck paths with $t$ $m$-falls and maximum descent $d$ is the same as counting $1$-Dyck paths of length $2tm$ with all descents from the set $E=\{m,2m,\ldots,dm\}$. (Simply replace each $m$-fall by $m$ $1$-falls.)

Lemma 8 \cite{EsperetParreau} can now be applied, with $\phi_E(x)=\sum_{i=0}^dx^{mi}$, to give a constant $c_E$ such that the number of such paths is at most $c_E\alpha^t$. So we can take $c_{a}(m,d)=c_E$.
\end{proof}      
\begin{lem}\label{bound2:lem}
  Given integers $h,m,t\geq 1$ let $\beta(m,h)$ be the reciprocal of the smallest root of \[\sum_{i=0}^{\lfloor (h+1)/(m+1)\rfloor} (-x)^i\binom{h-mi+1}{i}.\] There exists a constant $c_b=c_b(m,h)$ such that
  \[|\mathcal{D}_m(t(m+1),0,h,\cdot)|\leq c_b\beta^{t}.\]
Moreover $\beta(1,h)=4\cos^2\pi/(h+2)$.
\end{lem}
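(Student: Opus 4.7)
The natural approach is a transfer-matrix generating-function argument: identify the polynomial in the statement with $\det(I - M_h)$ for an appropriate transfer matrix $M_h$, and then extract the exponential growth rate of the coefficients by a standard rational-generating-function estimate.

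Let $M_h$ be the $(h+1)\times(h+1)$ matrix indexed by levels $0, 1, \ldots, h$ with $(M_h)_{j+1,j}=1$ for $0\leq j\leq h-1$ (recording a rise) and $(M_h)_{j-m,j}=x$ for $m\leq j\leq h$ (recording an $m$-fall). Then the number of full $m$-Dyck paths of length $t(m+1)$ of height at most $h$ is exactly the coefficient of $x^t$ in $(M_h^{t(m+1)})_{0,0}$, so the generating function $F_h(x)=\sum_t|\mathcal{D}_m(t(m+1),0,h,\cdot)|x^t$ equals $[(I-M_h)^{-1}]_{0,0}$, a rational function in $x$ whose denominator is $\det(I-M_h)$.

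The key step is to show that $\det(I-M_h)=\sum_{i=0}^{\lfloor (h+1)/(m+1)\rfloor}(-x)^i\binom{h-mi+1}{i}$. Expanding the determinant as a signed sum over permutations $\sigma$ of $\{0,\ldots,h\}$, only those with $\sigma(j)\in\{j,j+1,j-m\}$ can contribute. A short argument about the minimum element of a non-trivial cycle shows that every such cycle is forced to be an elementary $(m+1)$-cycle on consecutive levels $\{j,j+1,\ldots,j+m\}$, sending $j\mapsto j+1\mapsto\cdots\mapsto j+m\mapsto j$, and a direct sign-and-weight computation shows each contributes $-x$. The number of ways to place $i$ pairwise disjoint such cycles inside $\{0,\ldots,h\}$ is the number of ways to pack $i$ disjoint windows of length $m+1$ into a row of $h+1$ positions, namely $\binom{h-mi+1}{i}$. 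Summing with signs gives the claimed identity. I expect this cycle-classification step to be the most delicate part of the proof, though it is combinatorially clean.

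Finally, since $F_h(x)$ has non-negative coefficients, Pringsheim's theorem places its dominant singularity on the positive real axis; it is the smallest positive root $1/\beta$ of $\det(I-M_h)$, provided (as one checks) this root is not cancelled by a zero of the $(0,0)$-cofactor. A standard partial-fraction expansion then yields $|\mathcal{D}_m(t(m+1),0,h,\cdot)|\leq c_b\beta^t$ for a suitable constant $c_b=c_b(m,h)$ absorbing the polynomial factor from any multiplicity and the small-$t$ behaviour. For the special case $m=1$, substituting $x=1/y^2$ into $\sum_i(-x)^i\binom{h-i+1}{i}$ and clearing denominators recovers the matching polynomial of the path $P_{h+1}$, whose roots are the classical values $2\cos(k\pi/(h+2))$ for $k=1,\ldots,h+1$. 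The largest of these, $y_1=2\cos(\pi/(h+2))$, corresponds to the smallest root of the original polynomial and hence gives $\beta(1,h)=4\cos^2\pi/(h+2)$, as required.
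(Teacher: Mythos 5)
Your proposal is correct and takes a genuinely different route to the same rational generating function. The paper works within the weighted-lattice-path framework of P\'etr\'eolle et al.: it defines the family of polynomials $g_j(x)=\sum_i(-x)^i\binom{h-j-mi}{i}$, verifies the three-term recurrence $g_k-g_{k-1}=xg_{k+m}$ (truncated at height $h$), and then invokes their Proposition~2.3 as a black box to conclude that $g_0/g_{-1}$ is the ordinary generating function; the growth rate is read off from the smallest root of $g_{-1}$ by Cauchy--Hadamard, and the $m=1$ evaluation is cited from de~Bruijn et al. You instead set up a transfer matrix $M_h$ on the levels $\{0,\dots,h\}$, observe that the generating function is the $(0,0)$-entry of $(I-M_h)^{-1}$, and compute $\det(I-M_h)$ from scratch: the permutation expansion forces every non-trivial cycle to be an elementary cycle on $m+1$ consecutive levels (which is true, and your ``minimum-element'' argument works: the minimum $j$ must exit via $j\mapsto j+m$ and enter via $j+1\mapsto j$, and then $\sigma^{-1}(j+i)=j+i+1$ is forced for $1\le i<m$, closing the cycle at length $m+1$), each contributing $-x$, with $\binom{h-mi+1}{i}$ placements of $i$ disjoint windows. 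The growth rate then follows from Pringsheim plus partial fractions, and you recover the $m=1$ constant by recognising the polynomial, after the substitution $x=1/y^2$, as the matching polynomial of the path $P_{h+1}$ rather than by citation. Your version is more self-contained (no appeal to an external proposition about continued-fraction expansions of weighted Dyck paths), at the cost of the slightly delicate cycle-classification step; the paper's version is shorter but less transparent since the key identity is delegated to a reference.
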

\begin{proof} The enumeration of $m$-Dyck paths of bounded height $h$ is a special case of enumerating $m$-Dyck paths with weights $\alpha_i$ associated to descents from different levels. More precisely, associate to each step of an $m$-Dyck path a weight of $\alpha_i$ for a descent from height $i$ and a weight of $1$ for a rise. The weight of the path is then the product of the weights of its steps. Setting $\alpha_i=1$ for $0\leq i\leq h$ and $\alpha_i=0$ for $i>h$, the sum of weighted $m$-Dyck paths of a given length is simply the number of such paths of height bounded by $h$. This problem is considered by P\'etr\'eolle et al.~\cite{petreolle2018lattice}. For $j\geq -1$ we define\[
  g_{j}(x)=\sum_{i=0}^{\lfloor(h-j)/(m+1)\rfloor}(-x)^i\binom{h-j-mi}{i}.\]
  It is easy to check that these polynomials satisfy the recurrence:\[
  g_k(x)-g_{k-1}(x)=\left\{\begin{array}{ll} xg_{k+m}(x),& 0\leq k\leq h-m,\\ 0, & k>h-m.\end{array}\right.\] So Proposition 2.3 \cite{petreolle2018lattice} implies that $g_0(x)/g_{-1}(x)$ is the ordinary generating function for $m$-Dyck paths of height bounded by $h$.
  
  By the Cauchy--Hadamard theorem the asymptotic growth rate of the coefficients is the reciprocal of the  radius of convergence of the generating function. Since the generating function is the ratio of polynomials the radius of convergence is determined by the smallest root of $g_{-1}(x)$ and so the result follows.

  The fact that $\beta(1,h)=4\cos^2\pi/(h+2)$ can be found in de Bruijn et al. \cite{BKR}.
\end{proof} 
\section{Proof of main results}
\label{proof:sec}
\begin{proof}[Proof of Theorem \ref{main:thm}:]
Let $H$ be a $k$-graph with vertex set $V(H)=[h]:=\{1,2,\ldots,h\}$. Let $G$ be an $n$-blow-up of $H$ with vertex classes $A_1,\ldots,A_h$, where $A_i=\{w_1^i,\ldots,w_n^i\}$. Let $\Delta=\Delta(H)$ and $\Delta_0=\Delta_0(H)$.
Suppose, for a contradiction, that $G$ is $H$-free and has density \[d(G)\geq 1-\frac{1}{\alpha\Delta}+\epsilon,\] for some $\epsilon>0$ and where $\alpha=\alpha(k-1,\Delta_0)$ from Lemma \ref{bound1:lem}.

Define a projection map, $\pi_H:V(G)\to [h]$, by $\pi_H(w_j^i)=i$. We also define an index map $\ind_i:A_i\to[n]$ by $\ind_i(w_j^i)=j$. Given $P\subseteq V(G)$ and $e\in E(H)$ we define $P(e)=P\cap\cup_{i\in e}A_i$, this is the restriction of $P$ to those vertex classes of $G$ corresponding to the edge $e$.

We say $P\subseteq V(G)$ is a \emph{partial $H$-transversal} if and only if (i) $|P\cap A_i|\leq 1$ for $1\leq i \leq h$ and (ii) for every $e\in E(H)$ if $e\subseteq \pi_H(P)$ then $P(e)\in E(G)$. (Condition (i) ensures that no vertex class has more than one representative, while (ii) ensures that the subgraph induced by $P$ has all edges that are required.)
Note that if $P$ is a partial $H$-transversal then $\pi_H(P)$ is precisely the set of vertices of $H$ that are represented in $P$.

Consider running Algorithm (A) below.
\begin{figure}
  \begin{minipage}[c]{\textwidth}
    \textbf{Algorithm (A)}
  \begin{tabbing}
  \texttt{input:} \= $H$, $G$ an $n$-blow-up of $H$, $(z_{t})_{t=1}^\infty\in\{1,2,\ldots,n\}^\N$.\\
  \texttt{initialize:}  $P_0\leftarrow\emptyset$, $t\leftarrow1$.\\
  \texttt{wh}\=\texttt{ile ($P_{t-1}$ is not an $H$-transversal)} \\\> $i_t\leftarrow\min[h]\setminus \pi_H(P_{t-1})$.\\
  \>\texttt{wh}\=\texttt{ile ($i_t\not\in\pi_H(P_{t-1})$)}\\
 \>\> $P_t\leftarrow P_{t-1}\cup\{w^{i_t}_{z_t}\}$.\\
\>\>\texttt{if}\= \texttt{ ($P_t$ is a partial $H$-transversal)}\\\>\>\>\texttt{then }  $r_t\leftarrow1$\\ 
\>\> \texttt{el}\=\texttt{se}\\  
  \>\>\>\texttt{choose $e\in E(H)$ such that $e\subseteq \pi_H(P_t) $ and $P_t(e)\not\in E(G)$}\footnote{If more than one choice is available then select any.}  \\ 
  
  \>\>\>  $r_t\leftarrow P_t(e)$\\\>\>\> $P_t\leftarrow P_t\setminus P_t(e)$\\\>\>\> $i_{t+1}\leftarrow i_t$\\
  \>\>  $t\leftarrow t+1$\\
  \>\texttt{continue}\\
  \texttt{continue}\\
  \texttt{output $P_{t-1}$ and halt.}
\end{tabbing}
\end{minipage}
  \end{figure}

First note that Algorithm (A) does indeed make sense as a search algorithm for an $H$-transversal in $G$. At time $t$ it considers $i_t$, the smallest vertex of $H$ that is not currently represented in the partial $H$-transversal $P_{t-1}$. It then uses the next integer in the sequence $(z_{t})_{t=1}^\infty$  to select a vertex  $w_{z_t}^{i_t}\in A_{i_t}$. If adding this vertex to $P_{t-1}$ gives a partial $H$-transversal then the algorithm records this success by setting $r_t=1$ and continues to the next unrepresented vertex in $V(H)$. However if adding this vertex creates a set that is no longer a partial $H$-transversal then there must be an edge  $e\in E(H)$ such that the corresponding edge $P_t(e)$ is missing from $E(G)$. In this case the algorithm chooses one such edge $e\in E(H)$ and records the fact that it is missing from $E(G)$ by setting $r_t=P_t(e)$. The vertices in $P_t(e)$  are then removed from $P_t$ and at time $t+1$ the algorithm again tries to add a vertex to the same vertex class.

Let $(r_t)_{t=1}^s$ denote the record produced by the algorithm up to time $s$. When we refer to $P_t$ we mean the set $P_t$ at the end of the $t^\tr{th}$ iteration of the algorithm, i.e.~at the moment that $t\leftarrow t+1$. We claim that given $(P_s,(r_t)_{t=1}^s)$ we can reconstruct $(z_t)_{t=1}^s$, the integer sequence up to time $s$.
 
First we use $(r_t)_{t=1}^s$ to reproduce the sequences $(i_t)_{t=1}^s$ and $(\pi_H(P_t))_{t=1}^s$. This follows by induction on $t$. Clearly $i_1=1$ and $\pi_H(P_1)=\{1\}$, so suppose now  that we have $(r_t)_{t=1}^s$ and $i_{t}$,  $\pi_H(P_t)$ are both known for some $1\leq t<s$. If $r_t=1$ then $i_{t+1}=\min[h]\setminus \pi_H(P_t)$ otherwise $i_{t+1}=i_t$. Using this we can obtain
\[
\pi_H(P_{t+1})=\left\{
\begin{array}{ll}
\pi_H(P_t)\cup \{i_{t+1}\},& \textrm{if }r_{t+1}=1,\\
\pi_H(P_t)\setminus \pi_H(r_{t+1}),&\textrm{otherwise.}
\end{array}\right.\]
We can now reconstruct both $(z_t)_{t=1}^s$ and $(P_t)_{t=1}^s$ using  $(i_t)_{t=1}^s$, $(\pi_H(P_t))_{t=1}^s$ and $(P_s,(r_t)_{t=1}^s)$. We use reverse induction on $t$.
Indeed we are given $P_s$ and if we have found $P_t$ for any  $t\leq s$ then 
\[
z_t=\left\{
\begin{array}{ll}
\ind_{i_t}(P_t\cap A_{i_t}),& \textrm{if }r_{t}=1,\\
\ind_{i_t}(r_t \cap A_{i_t}),&\textrm{otherwise.}
\end{array}\right.\]
Moreover having obtained $z_t$ we can then find $P_{t-1}$ since
\[
P_{t-1}=\left\{
\begin{array}{ll}
P_t\setminus \{w_{z_t}^{i_t}\},& \textrm{if }r_{t}=1,\\
P_t\cup r_t,&\textrm{otherwise.}
\end{array}\right.\]
Hence we can recover $(P_t)_{t=1}^s$ and $(z_t)_{t=1}^s$ as required.

Since $G$ is by assumption $H$-free, Algorithm (A) never halts irrespective of the integer sequence $(z_t)_{t=1}^s\in[n]^s$. This implies that there must be at least $n^s$ possibilities for $(P_s,(r_t)_{t=1}^s)$. 

We focus first on enumerating the possibilities for $(r_t)_{t=1}^s$. We form a modified version of this sequence that keeps track of the size of the partial $H$-transversal $P_t$. This modified sequence is a partial $(k-1)$-Dyck path (recall that $H$ is a $k$-graph) defined by \[
r^\circ_t=\left\{
\begin{array}{ll}
  \dycku,& r_t=1,\\
  \dyckk,& r_t=P_t(e). \end{array}\right.\] So $r_t^\circ$ simply records the change in the size of $|P_t|$ on the $t^\tr{th}$ iteration of the algorithm. Since $P_0=\emptyset$ and the algorithm never builds a complete $H$-transversal, $(r_t^\circ)_{t=1}^s$ is a partial $(k-1)$-Dyck-path of length $s$, with height bounded above by $h-1=|V(H)|-1$. (See Section \ref{gendyck:sec} for definitions.)

A sequence of repeated $(k-1)$-falls in this path corresponds to repeatedly removing edges from a single vertex in $H$ that meet only at this vertex, so there are never more than $\Delta_\disj$ such $(k-1)$-falls in a row. (Recall that $\Delta_\disj$ is the maximum disjoint degree of $H$.) Hence this path has maximum descent at most $\Delta_\disj $.

How many different sequences $(r_t)_{t=1}^s$ could give rise to the same path $(r_t^\circ)_{t=1}^s$? For each $r_t^\circ=\,\,\dycku$ there is a single choice for $r_t$, namely $r_t=1$. While if $r_t^\circ=\,\,\dyckk$ then there is an edge $e\in E(H)$ that contains the vertex $i_t$ such that $r_t=P_t(e)\not\in E(G)$. The number of possible choices for $e$ is at most the degree of $i_t$ in $H$ which is at most $\Delta$. Moreover, the number of choices for $P_t(e)$ given $e$ is at most $n^k-|G[e]|\leq n^k(1-d(G))$. Thus overall the number of choices for $P_t(e)$ is at most $\Delta(1-d(G))n^k$.

A path $(r_t^\circ)_{t=1}^s$ contains at most $s/k$ $(k-1)$-falls (since it has length $s$ and always remains in the upper half-plane) so at most $(\Delta(1-d(G))n^k)^{s/k}$ distinct original sequences $(r_t)_{t=1}^s$ can give rise to the same path.

Finally, since each $(r_t^\circ)_{t=1}^s\in\mathcal{D}_{k-1}(s,l,h-1,\Delta_0)$ for some $0\leq l\leq h-1$, Lemma \ref{full:lem} together with Lemma \ref{bound1:lem} imply that the number of different possible sequences $(r_t)_{t=1}^s$ is at most
\[
(\Delta(1-d(G))n^k)^{s/k}hc_{a}\alpha^{(s+4h)/k},\]
where $c_{a}=c_{a}(k-1,\Delta_0)$ and $\alpha=\alpha(k-1,\Delta_0)$.

Recall that we wanted to count the possibilities for $(P_s,(r_t)_{t=1}^s)$, which should be at least $n^s$ since this is the number of different integer sequences that can be reconstructed from this information. The number of possibilities for the final partial $H$-transversal $P_s$ is less than $(n+1)^h$ since $P_s$ consists of a choice of at most one vertex from each vertex class $A_i$.
Hence
\[
 hc_{a}(n+1)^h\alpha^{4h/k}(\Delta\alpha(1-d(G))n^k)^{s/k}\geq n^s.\]
But by assumption $1-d(G)\leq 1/\alpha\Delta-\epsilon$, so for $s,n$ large this is impossible. This proves the first inequality in the theorem.
 
The second inequality in Theorem \ref{main:thm}, for non-$\Delta$-regular $H$, follows from a simple variant of the method. We now use a tree to choose the vertex class under consideration at time $t$.

Given a connected $k$-graph $H$, we define the \emph{skeleton} of $H$ to be the 2-graph $H_2$ with vertex set $V(H)$ and $xy\in E(H_2)$ if and only if there is a hyperedge $e\in E(H)$ such that $x,y\in e$. Given a tree $T\subseteq H_2$ and a hyperedge $f\in E(H)$ that meets $T$ in a single leaf $v$ we define $T\oplus_v f$ to be the tree in $H_2$ formed from $T$ by adding each vertex $w\in f\setminus\{v\}$ as a leaf with parent $v$.  We define $\minl(T)$ to be the smallest leaf of $T$ (recall $V(H)=[h]$ is ordered).

Suppose, for a contradiction, that $G$ is $H$-free and has density \[d(G)\geq 1-\frac{1}{\beta(\Delta-1)}+\epsilon,\] for some $\epsilon>0$ and where $\beta=\beta(k-1,h-1)$ from Lemma \ref{bound2:lem}.

Consider running Algorithm (B) described below. The input we give is the same as to Algorithm (A) with the addition of a spanning tree $T$ of $H_2$. Recall that $V(H)=[h]$. Since $H$ is not $\Delta$-regular we may assume (by reordering $V(H)$ if needed) that the degree of vertex $h$ is at most $\Delta-1$.
\begin{figure}
\begin{minipage}[c]{\textwidth}
  \textbf{Algorithm (B)}
\begin{tabbing} 
  \texttt{input:} \= $H$, $T$ a spanning tree of $H_2$, $G$ an $n$-blow-up of $H$, $(z_{t})_{t=1}^\infty\in\{1,2,\ldots,n\}^\N$.\\
  \texttt{initialize:}  $P_0\leftarrow\emptyset$, $T_0\leftarrow T$, $t\leftarrow1$.\\
 \texttt{wh}\=\texttt{ile ($P_{t-1}$ is not an $H$-transversal)} \\\> $i_t\leftarrow\minl(T_{t-1})$.\\
 \> $P_t\leftarrow P_{t-1}\cup\{w^{i_t}_{z_t}\}$.\\
\>\texttt{if }\=\texttt{($P_t$ is a partial $H$-transversal)}\\
\>\>\texttt{th}\=\texttt{en }\\
\>\>\>  $r_t\leftarrow1$\\
\>\>\> $T_t\leftarrow T_{t-1}\setminus \{i_t\}$\\
\>\> \texttt{el}\=\texttt{se}\\   
  \>\>\>\texttt{choose $e\in E(H)$ such that $e\subseteq \pi_H(P_t) $ and $P_t(e)\not\in E(G)$}\footnote[2]{Again if more than one choice is available then select any.}   \\  
  \>\>\>  $r_t\leftarrow P_t(e)$\\\>\>\> $T_t\leftarrow T_{t-1}\oplus_{i_t}\pi_H(P_t(e))$\\
  \>\>\> $P_t\leftarrow P_t\setminus P_t(e)$\\
  \>  $t\leftarrow t+1$\\
  \texttt{continue}\\
  \texttt{output $P_{t-1}$ and halt.}
\end{tabbing}
\end{minipage}
\end{figure}

First note that Algorithm (B) does indeed make sense as a search algorithm for an $H$-transversal in $G$. At time $t$ it considers $i_t$, the smallest leaf of the current tree $T_{t-1}$. It then uses the next integer in the sequence $(z_{t})_{t=1}^\infty$  to select a vertex  $w_{z_t}^{i_t}\in A_{i_t}$. If adding this vertex to $P_{t-1}$ gives a partial $H$-transversal then the algorithm records this success by setting $r_t=1$ and deletes the leaf $i_t$ from $T_{t-1}$ to give the next tree $T_t$. However if adding this vertex creates a set that is no longer a partial $H$-transversal then there must be an edge  $e\in E(H)$, containing $i_t$, such that the corresponding edge $P_t(e)$ is missing from $E(G)$. In this case the algorithm chooses one such edge $e\in E(H)$ and records the fact that it is missing from $E(G)$ by setting $r_t=P_t(e)$. The vertices of $e$ (except $i_t$) are then added as leaves adjacent to $i_t$ in $T_{t-1}$ to give the next tree $T_t$, while the vertices in $P_t(e)$  are removed from $P_t$. (Note that as $\pi_H(P_{t-1})$ and $T_{t-1}$ are disjoint by construction, $e$ meets $T_{t-1}$ only at $i_t$ so this does indeed yield a tree.)

Let $(r_t)_{t=1}^s$ denote the record produced by the Algorithm (B) up to time $s$. As before, when we refer to $P_t$ we mean the set $P_t$  at the end of the $t^\tr{th}$ iteration of the algorithm, i.e.~at the moment that $t\leftarrow t+1$. We claim that given $(P_s,(r_t)_{t=1}^s)$ we can reconstruct $(z_t)_{t=1}^s$.
 
First we use $(r_t)_{t=1}^s$ to reproduce the sequences $(T_t)_{t=1}^s$ and $(\pi_H(P_t))_{t=1}^s$. This follows by induction on $t$. Clearly $i_1=\minl(T)$ and $\pi_H(P_1)=\{i_1\}$, so suppose $T_t$,  $\pi_H(P_t)$ are both known for some $1\leq t<s$. We have $i_{t+1}=\minl(T_t)$ so
\[
T_{t+1}=\left\{
\begin{array}{ll}
  T_t\setminus \{i_{t+1}\},& \textrm{if }r_{t+1}=1,\\
  T_{t}\oplus_{i_{t+1}}\pi_H(r_{t+1}),& \textrm{otherwise.} 
\end{array}\right.\]
While $\pi_H(P_{t+1})=V(H)\setminus T_t$.

Note that having found $\{T_t\}_{t=1}^s$  we have $i_t=\minl(T_{t-1})$ so we also know $(i_t)_{t=1}^s$.
We can now reconstruct both $(z_t)_{t=1}^s$ and $(P_t)_{t=1}^s$ using  $(i_t)_{t=1}^s$, $(\pi_H(P_t))_{t=1}^s$ and $(P_s,(r_t)_{t=1}^s)$. We use reverse induction on $t$.
Indeed we are given $P_s$ and if we have found $P_t$ for any  $t\leq s$ then 
\[
z_t=\left\{
\begin{array}{ll}
\ind_{i_t}(P_t\cap A_{i_t}),& \textrm{if }r_{t}=1,\\
\ind_{i_t}(r_t \cap A_{i_t}),&\textrm{otherwise.}
\end{array}\right.\]
Moreover having obtained $z_t$ we can then find $P_{t-1}$ since
\[
P_{t-1}=\left\{
\begin{array}{ll}
P_t\setminus \{w_{z_t}^{i_t}\},& \textrm{if }r_{t}=1,\\
P_t\cup r_t,&\textrm{otherwise.}
\end{array}\right.\]
Hence we can recover $(P_t)_{t=1}^s$ and $(z_t)_{t=1}^s$ as required.

Since $G$ is by assumption $H$-free, Algorithm (B) never halts irrespective of the integer sequence $(z_t)\in[n]^s$. Moreover, since we can reconstruct this integer sequence from $(P_s,(r_t)_{t=1}^s)$ and there are $n^s$ such sequences,  there must be at least $n^s$ possibilities for $(P_s,(r_t)_{t=1}^s)$. 

As before we form a modified version of this sequence that keeps track of the size of the partial $H$-transversal $P_t$. This modified sequence  is a partial $(k-1)$-Dyck path defined by \[
r^\circ_t=\left\{
\begin{array}{ll}
  \dycku,& r_t=1,\\
  \dyckk,& \textrm{otherwise.} \end{array}\right.\] So $r_t^\circ$ simply records the change in the size of $|P_t|$ on the $t^\tr{th}$ iteration of the algorithm. Since $P_0=\emptyset$ and the algorithm never builds a complete $H$-transversal, $(r_t^\circ)_{t=1}^s$ is a partial $(k-1)$-Dyck-path of length $s$, with height bounded above by $h-1=|V(H)|-1$. (See Section \ref{gendyck:sec} for definitions.)

How many different sequences $(r_t)_{t=1}^s$ could give rise to the same path $(r_t^\circ)_{t=1}^s$? For each $r_t^\circ=\,\,\dycku$ there a single choice for $r_t$, namely $r_t=1$. While if $r_t^\circ=\,\,\dyckk$ then there is an edge $e\in E(H)$ that contains the vertex $i_t$ such that $r_t=P_t(e)\not\in E(G)$. If $i_t=h$ then the number of possible choices for $e$ is at most $\Delta-1$ (since by assumption this vertex has degree less than $\Delta$). Otherwise, since  $i_t$ is a leaf of $T_{t-1}$, it has a parent $p$ in $T_{t-1}$. In this case the possible choices for $e$ are all edges such that $i_t\in e$ and $p\not\in e$. This is again at most $\Delta-1$ since at least one edge contains both vertices. The number of choices for $P_t(e)$ given $e$ is at most $n^k-|G[e]|\leq n^k(1-d(G))$. Thus overall the number of choices for $P_t(e)$ is at most $(\Delta-1)(1-d(G))n^k$.

A path $(r_t^\circ)_{t=1}^s$ contains at most $s/k$ $(k-1)$-falls (since it has length $s$ and always remains in the upper half-plane) so at most $((\Delta-1)(1-d(G))n^k)^{s/k}$ distinct original sequences can give rise to the same path.

Finally, since  $(r_t^\circ)_{t=1}^s\in\mathcal{D}_{k-1}(s,l,h-1,\cdot)$ for some $0\leq l\leq h-1$ so Lemma \ref{full:lem} and \ref{bound2:lem} imply that the number of different possible sequences $(r_t)_{t=1}^s$ is at most
\[
 hc_b\beta^{(s+4h)/k}((\Delta-1)(1-d(G))n^k)^{s/k},\]
where $c_b=c_b(k-1,h-1)$ and $\beta=\beta(k-1,h-1)$.

As before the number of possibilities for $P_s$ is less than $(n+1)^h$ since $P_s$ consists of a choice of at most one vertex from each vertex class $A_i$.
Hence counting possiblities for  $(P_s,(r_t)_{t=1}^s)$ we must have
\[
 hc_b(n+1)^h\beta^{4h/k}((\Delta-1)\beta(1-d(G))n^k)^{s/k}\geq n^s.\]
 But by assumption \[
 1-d(G)\leq \frac{1}{\beta(\Delta-1)}-\epsilon,\] so for $s,n$ large this is impossible.  This proves the second bound and completes the proof of the theorem.
\end{proof}  
\begin{proof}[Proof of Theorem \ref{complete:thm}]
 This follows easily using Algorithm (A) above and noting that for $H=K_{k+l}^{(k)}$ it is easy to count the exact number of partial $(k-1)$-Dyck paths of length $tk+k-1$ bounded by height $k+l-1$. Any such path starts with $k-1$ rises, then one of the next $(l+1)$ steps must be be a $(k-1)$-fall and then the path must return to level $k-1$ using rises. Hence the number of such paths is exactly  $(l+1)^t$. The result then follows as before.
  \end{proof}
\bibliography{joc_final}
\bibliographystyle{plain}
\end{document}